\newtheorem{cor}{Corollary}[section]
\newtheorem {teo}[cor]{Theorem}
\newtheorem {prop}[cor]{Proposition}
\newtheorem {lemm}[cor]{Lemma}
\newtheorem{ex}[cor]{Example}
\newtheorem{oss}[cor]{Remark}
\newcommand{\hada} {\star}
\newcommand{\PP} {{\mathbb{P}}}
\newcommand\undermat[2]{
	\makebox[0pt][l]{$\smash{\underbrace{\phantom{%
					\begin{matrix}#2\end{matrix}}}_{\text{$#1$}}}$}#2}
\begin{document}
	\title {Hadamard products of degenerate subvarieties }

	\author{G. Calussi}
	\address{Dipartimento di Matematica e Informatica "Ulisse Dini", Universit\`a di Firenze \\
		Viale Morgagni 67/a, 50134 Firenze, Italy
		\vskip0.1cm Dipartimento di Matematica e Informatica, Universit\`a di Perugia \\
		via Vanvitelli 1, 06123 Perugia, Italy}
	\email{gabriele.calussi@gmail.com}
	
	\author{E. Carlini}
	\address{DISMA-Dipartimento di Scienze Matematiche, Politecnico di Torino \\
		Corso Duca degli Abruzzi 24, 10129 Torino, Italy}
	\email{enrico.carlini@polito.it}
	
	\author{G. Fatabbi}
	\address{Dipartimento di Matematica e Informatica, Universit\`a di Perugia \\
		via Vanvitelli 1, 06123 Perugia, Italy}
	\email{giuliana.fatabbi@unipg.it}
	\author{A. Lorenzini}
	\address{Dipartimento di Matematica e Informatica, Universit\`a di Perugia \\
		via Vanvitelli 1, 06123 Perugia, Italy}
	\email{annalor@dmi.unipg.it}
	\date{\today}
	\thanks{The first, the third and the fourth author thank the Politecnico of Torino for its support while visiting the second author.
		\\The first author thanks GNSAGA of INdAM and MIUR for their partial support.}
	
	\subjclass[2010]{14N05, 14M20, 13D40}
	\keywords{Hadamard products, dimension, degree, Hilbert function, singularities}
\begin{abstract}
We consider generic degenerate subvarieties $X_i\subset\mathbb{P}^n$. We determine an integer $N$, depending on the varieties, and for $n\geq N$ we compute dimension and degree formulas for the Hadamard product of the varieties $X_i$. Moreover, if the varieties $X_i$ are smooth, their Hadamard product is smooth too. For $n<N$, if the $X_i$ are generically $d_i$-parameterized, the dimension and degree formulas still hold. However, the Hadamard product can be singular and we give a lower bound for the dimension of the singular locus.
\end{abstract}

\maketitle

\section{Introduction}
The Hadamard product of matrices is a well established operation in Mathematics having several connections both theoretical and applied, for example see the recent entry in T. Tao's blog about the paper \cite{KhareTao2017}.
\par  More recently the definition of the {\em Hadamard product} between subvarieties $X,Y$ of projective space, denoted $X \hada Y$, has been introduced by \cite{CMS} as the  closure of the image of the rational map
\[
X \times Y \dashrightarrow  {\mathbb{P}}^n, \quad  ([a_0:\dots : a_n], [b_0: \dots :b_n])\mapsto [a_0b_0 :a_1b_1:\ldots :a_nb_n].
\]

This product is far less studied and our knowledge is still at a developing stage but it is attracting quite a lot of attention and applications have been shown, for example, to Algebraic Statistics (see \cite{CMS,CTY}).
In particular, in \cite{CMS} it is shown that the restricted Boltzmann machine is a graphical model for binary random variables, starting with the observation that its Zariski closure is a Hadamard power of the first secant variety of the Segre variety of projective lines.
\par
The Hadamard product of varieties is also related to tropical geometry, for instance the tropicalization of the Hadamard product of two varieties is the Minkowski sum of the tropicalizations of the two varieties (see \cite[Proposition 5.1]{BCK}, \cite{FOW}, \cite{MS}).

One of the most important open question is to find the dimension and the degree of the Hadamard product of varieties. In \cite{BCK} the authors give formulas for the dimension and the degree of the Hadamard product of general linear spaces. They also introduce the notion of expected dimension of the Hadamard product of irreducible varieties. In \cite{FOW} the authors give an expected formula for the degree of the Hadamard product of varieties in general position. In this paper we prove that the expected formula holds for the Hadamard product of generic degenerate subvarieties if the ambient space is large enough, thus partially answering \cite[Question 1.1]{FOW}.

\par We work over an algebraically closed  field $\mathbb{K}$ of characteristic $0$ and we assume that all the varieties we consider are irreducible.
\par Given a subvariety $V$ of projective space we denote by $I_V$ its (saturated radical homogeneous) ideal and by $HF_V$ its {\em Hilbert function}; that is, $HF_V(t)=dim_\mathbb{K}R_t/(I_V)_t$ where $R=\mathbb{K}[x_0,\dots,x_n]=\bigoplus\limits_{t\geq 0} R_t$, $R_t$ is the vector space of the homogeneous polynomials of degree $t$ and $(I_V)_t=I_V\cap R_t$.

\par In Section \ref{secprod} we prove that, if $X_1,\dots, X_\ell$ are $\ell$ degenerate subvarieties of $\PP^n$ whose linear spans are generic of dimension $h_1,\dots,h_\ell$ respectively, with $n\geq (h_1+1)\cdots(h_\ell+1)-1$, then the Hadamard product $X_1 \hada\dots\hada X_\ell$ and the product variety $X_1\times\dots\times X_\ell$ are projectively equivalent as subvarieties of $\PP^n$. As a consequence we obtain that the dimension of $X_1 \hada\dots\hada X_\ell$ is the sum of the dimensions, the degree is the product of the degrees multiplied by a multi-binomial coefficient depending on the dimensions and the Hilbert function is the product of the Hilbert functions. These degree and dimension formulas generalize the ones in \cite[Theorem 6.8]{BCK} which are only given for linear spaces. We also prove that, if the varieties $X_i$ are smooth, then their Hadamard product is smooth.

\par In Section 3 we consider two generically $d_X$-parameterized and $d_Y$-parameterized subvarieties of $\PP^n$ of dimension $r$, $s$, respectively, with $N-\left(\binom{r+d_X}{d_X}+\binom{s+d_Y}{d_Y}-2\right)\leq n\leq N-1$ where $N=\binom{r+d_X}{d_X}\binom{s+d_Y}{d_Y}-1$. In this case the formula for the Hilbert function no longer holds, but we still have the dimension and degree formulas. We also extend these results to a finite number of subvarieties. In this situation singularities may arise even if the varieties are smooth: on one hand we give a numerical condition sufficient for smoothness and, on the other hand, we give a numerical condition sufficient for the Hadamard product to be singular. In the latter case,  we give a lower bound for the dimension of the singular locus.
\par We conclude with some explicit examples in Section \ref{section4}. These examples show the role of the genericity assumption and how singularities can arise.
\vskip.2cm
\par We wish to thank B. Sturmfels for some useful conversations and suggestions.
\par We wish to thank the Referees for their careful reading of the paper and the helpful comments and suggestions.
\section{Large ambient space}\label{secprod}

In this section we consider the Hadamard product of subvarieties whose linear spans are generic and in particular the case in which the ambient space has dimension large enough in a very precise sense. Note that \cite[Theorem 4.1]{BCFL2} considered the product of generic linear spaces.

In what follows we embed $\PP^{h_1}\times\cdots \times \PP^{h_\ell}$ in $\PP^n$, where $n\geq N=(h_1+1)\cdots(h_\ell+1)-1$, via the composition of the usual Segre embedding of $\PP^{h_1}\times\cdots \times \PP^{h_\ell}$ in $\PP^N$ with the immersion $\PP^N \hookrightarrow \PP^n$ given by $[a_0:\dots :a_N]\mapsto [a_0: \dots :a_N:0:\dots:0]$. We still call this composition Segre embedding and we denote it by $\sigma$.

We also need to recall the {\em Khatri-Rao product} (developed by single rows) of two matrices (see \cite{KR}): given a $p\times q$-matrix $A=\left(a_{ij}\right)$ and a $p\times t$ matrix $B=\begin{pmatrix}B_1 \\ \vdots \\ B_p \end{pmatrix}$, the Khatri-Rao product (developed by single rows) is the $p\times qt$-matrix:
$$
A\otimes_{KR} B=\begin{pmatrix}a_{11}B_1 & \cdots & a_{1q}B_1 \\ \vdots & \vdots& \vdots \\ a_{p1}B_p & \cdots & a_{pq}B_p \end{pmatrix}.
$$
\par We first state a technical Lemma on the Khatri-Rao product of special matrices which will be used in the proof of Theorem \ref{teoX1timesXkbign2}. The proof of this Lemma is a straightforward computation.
\par If $s$ and $k$ are positive integers, we denote by $N^{^s}_{_k}$ the matrix of size $ks\times s$
\[
N^{^s}_{_k}=\left(\begin{array}{c}
\rotatebox[origin=c]{90}{\text{{\scriptsize   {\it k} -times}}} \left\{\begin{array}{cccccc}
  1 & 0 & 0 & \dots & 0 & 0\\
  \vdots & \vdots &\vdots & \cdots & \vdots & \vdots \\
  1 & 0 &  0 &\dots & 0 & 0
\end{array}\right.\\
\rotatebox[origin=c]{90}{\text{{\scriptsize   {\it k} -times}}} \left\{  \begin{array}{cccccc}
  0 & 1 & 0 & \dots & 0 & 0\\
  \vdots & \vdots &\vdots & \cdots & \vdots & \vdots \\
  0 & 1 &  0 &\dots & 0 & 0
\end{array}\right.\\
\vdots

\\
\rotatebox[origin=c]{90}{\text{{\scriptsize   {\it k} -times}}} \left\{ \begin{array}{cccccc}
  0 & 0 & 0 & \dots & 0 & 1\\
  \vdots & \vdots &\vdots & \cdots & \vdots & \vdots \\
  \undermat{s}{ 0 & 0 &  0 &\dots & 0 & 1}
\end{array}\right.\\

\end{array}\right).
\]
\vskip 1cm
Note that, if $k=1$, then $N^{^s}_{_1}=I_s$, where $I_s$ denotes the identity matrix of size $s$.

\vskip 1cm
\begin{lemm}\label{lemmaKR}
Let $a,b,c,n$ be positive integers with $n\geq abc$. If $
A$ is the $n\times a$ matrix $\begin{pmatrix} N^{^a}_{_{bc}}\\
\\
\text{\LARGE 0}
\end{pmatrix}
$ and
$B$ is the $n\times b$ matrix $\left(\begin{array}{c}
\rotatebox[origin=c]{90}{\text{{\scriptsize   {\it a} -times}}} \left\{\begin{array}{c}
 N^{^b}_{_{c}}\\ \vdots \\N^{^b}_{_c}
\end{array}\right.\\
\\
\ \ \ \text{ \LARGE 0}
\end{array}
\right)
$, then
$
A\otimes_{KR}B$ is the $n\times ab$ matrix $\begin{pmatrix} N^{^{ab}}_{_c}\\
\\
\text{\LARGE 0}
\end{pmatrix}
$.
\end{lemm}

\begin{teo}\label{teoX1timesXkbign2}
Let $\ell$ be a positive integer. Let $X_1, \dots , X_\ell$ be subvarieties of $\PP^n$ whose linear spans are generic of dimension $h_1,\dots,h_\ell$, respectively. If $n\geq N=(h_1+1)\cdots(h_\ell+1)-1$, then the Hadamard product $X_1 \hada \cdots \hada X_\ell$ and the product variety $X_1\times \cdots \times X_\ell$ are projectively equivalent as subvarieties of $\PP^n$.
\end{teo}
\begin{proof}
Let $L_1,\dots,L_\ell$ be the linear spans of the subvarieties $X_1, \dots , X_\ell$. Assume that $L_i$  have parametric equations given respectively  by
\[
L_i:\begin{cases}
x_0=f_{i0}(y_{i0}, \dots, y_{ih_i})\\
x_1=f_{i1}(y_{i0}, \dots, y_{ih_i})\\
\qquad  \qquad \vdots \\
x_n=f_{in}(y_{i0}, \dots, y_{ih_i})\\
\end{cases}
\]
where $f_{ij}(y_{i0}, \dots, y_{ih_i})=a_{j0}^{(i)}y_{i0}+a_{j1}^{(i)}y_{i1}+\cdots +a_{jh_i}^{(i)}y_{ih_i}$ for $i=0, \dots, \ell$ and for $j=0, \dots, n$ .
For each $i=0, \dots, \ell$ consider the matrix of size $(n+1)\times (h_i+1)$ defined as
 \[
M_i
=\begin{pmatrix}
a_{00}^{(i)} &   \dots &a_{0h_i}^{(i)}\\
& &\\
a_{10}^{(i)} &  \dots & a_{1h_i}^{(i)}\\
& &\\
\vdots & \vdots &  \vdots \\
& &\\
a_{n0}^{(i)} & \dots &  a_{nh_i}^{(i)}\\
\end{pmatrix}.
\]
Now consider the matrix $M'$ of size $(n+1)\times (N+1)$  given by the Khatri-Rao product (developed by single rows)
$$
M'=M_1\otimes_{KR}\cdots\otimes_{KR}M_\ell.
$$
Associating to each $f_{ij}$ a point of $(\PP^{h_i})^{^*}$, we can associate each $L_i$ to a point of $\undermat{n \mbox{ times}}{(\PP^{h_i})^{^*}\times\cdots\times (\PP^{h_i})^{^*}}$.
\par
\vskip.75cm

Now, the zero locus defined by the maximal minors of $M'$ which are multi-homogeneous polynomials, is a closed set whose complement is an open subset of $ \undermat{n \mbox{ times}}{(\PP^{h_1})^{^*}\times\cdots\times (\PP^{h_1})^{^*}}\times\cdots\times \undermat{n \mbox{ times}}{(\PP^{h_\ell})^{^*}\times\cdots\times (\PP^{h_\ell})^{^*}}$.
\vskip .75cm
\par
To see that such an open set is non-empty choose $$
M_i=\left(\begin{array}{c}
\rotatebox[origin=c]{90}{\text{{\footnotesize  {\it \underline{i}} -times}}} \left\{ \begin{array}{c}
N^{^{h_i+1}}_{_{\overline{i}}}\\
\vdots\\
N^{^{h_i+1}}_{_{\overline{i}}}\\
\end{array}\right.\\
\\
\ \ \ \text{\LARGE 0}
\end{array}
\right)
$$
where $\underline{i}=(h_{1}+1)\cdots(h_{i-1}+1)$  and $\overline{i}=(h_{i+1}+1)\cdots(h_{\ell}+1)$, with the understanding that $\underline{1}=1$ and $\overline{\ell}=1.$ By recursively using Lemma \ref{lemmaKR}, we obtain that $$((M_1\otimes_{KR} M_2)\otimes_{KR}\cdots )\otimes_{KR} M_\ell=\begin{pmatrix} N^{^{\underline{\ell+1}}}_{_{1}}\\
\\
\text{\LARGE 0}
\end{pmatrix}=\begin{pmatrix} I_{\underline{\ell+1}}\\
\\
\text{\LARGE 0}
\end{pmatrix}.
$$
Since $\underline{\ell+1}=N+1$, we have that $M'=\left(\begin{matrix} I_{N+1}\\ 0\end{matrix}\right)$.

\par Therefore the genericity of  $L_1,\dots,L_\ell$ gives that the matrix $M'$ has maximum rank $N+1$.
\par For $n>N$, we can complete the matrix $M'$ to a matrix $M$ of size $(n+1)\times (n+1)$ with $det(M)\not=0$, so that $M$ gives a projective isomorphism.
Let $\sigma$ be the Segre embedding of $\PP^{h_1}\times\cdots\times \PP^{h_\ell}$ in $\PP^n$, as defined at the beginning of this section. A direct computation shows that $$\left(M\circ\sigma\right)\left( [y_{10}:\dots : y_{1h_1}],\dots,[y_{\ell0}:\dots :z_{\ell k}]\right)=$$ $$[f_{10}(y_{10},\dots , y_{1h_1})\cdots f_{\ell 0}(y_{\ell 0},\dots ,y_{\ell h_\ell}):\ldots:f_{1n}(y_{10},\dots , y_{1h_1})\cdots f_{\ell n}(y_{\ell 0},\dots ,y_{\ell h_\ell})].$$
\par Therefore $M(P)\in \Sigma=\{P_1\hada\cdots\hada P_\ell | P_i\in L_i \}$ and so the map $\PP^n\stackrel{M}{\to} \PP^n$   sends each point $P\in \sigma(\PP^{h_1}\times\cdots\times \PP^{h_\ell})$ to a point $M(P)\in \Sigma \subseteq L_{1} \hada\cdots\hada L_\ell$.
Since $h_1+\cdots +h_\ell=dim (\sigma(\PP^{h_1}\times\cdots\times \PP^{h_\ell})\leq dim (L_{1} \hada\cdots\hada L_\ell)\leq h_1+\cdots +h_\ell,$ they are projectively equivalent.
Thus we have that $L_{1} \hada\cdots\hada L_\ell=\Sigma$. Therefore $P_1\hada\dots \hada P_\ell$ is always well-defined for any points $P_i\in L_i$ and thus for any points $P_i\in X_i$.
\par We just proved that $$M\left(\sigma\left(X_1\times \cdots \times X_\ell\right)\right)\subseteq\{P_1\hada\cdots \hada P_\ell|P_i\in X_i \text{ for all } i=0, \dots, \ell \}\subseteq X_1\hada \cdots \hada X_\ell.$$ Since $dim(X_1)+\cdots+dim(X_\ell)=dim (M(\sigma(X_1\times \cdots \times X_\ell)))\leq dim (X_1\hada \cdots \hada X_\ell)\leq dim(X_1)+\cdots +dim(X_\ell),$ they are projectively equivalent, as we wished.
\end{proof}
\begin{oss}\label{rmkM'}{\rm
Note that in the case of two subvarieties $X_1$ and $X_2$ the matrix $M'$ defined in the proof of the previous Theorem is given by:
\[
M'
=\begin{pmatrix}
a_{00}b_{00} &   \dots & a_{00}b_{0h_2} &a_{01}b_{00} & \dots & a_{0h_1}b_{0h_2}\\
a_{10}b_{10} &  \dots & a_{10}b_{1h_2} & a_{11}b_{10} & \dots & a_{1h_1}b_{1h_2}\\
\vdots & \vdots & \vdots & \vdots & \ddots & \vdots \\
a_{n0}b_{n0} & \dots & a_{n0}b_{nh_2} & a_{n1}b_{n0} &  \dots & a_{nh_1}b_{nh_2}\\
\end{pmatrix}
\]
if the linear spans of $X_1$  and $X_2$ have parametric equations given respectively  by
{\small \[
L_1:\begin{cases}
x_0=a_{00}y_0+a_{01}y_1+\cdots +a_{0h_1}y_{h_1}\\
x_1=a_{10}y_0+a_{11}y_1+\cdots +a_{1h_1}y_{h_1}\\
\qquad  \qquad \vdots \\
x_n=a_{n0}y_0+a_{n1}y_1+\cdots +a_{nh_1}y_{h_1}\\
\end{cases}
L_2:\begin{cases}
x_0=b_{00}z_0+b_{01}z_1+\cdots +b_{0k}z_{h_2}\\
x_1=b_{10}z_0+b_{11}z_1+\cdots +b_{1k}z_{h_2}\\
\qquad  \qquad \vdots \\
x_n=b_{n0}z_0+b_{n1}z_1+\cdots +b_{nk}z_{h_2}
\end{cases}.
\]}
In the proof of Theorem \ref{teoX1timesXkbign2} the genericity of the linear spans $L_1$ and $L_2$ is only used to say that the matrix $M'$ has maximum rank $N+1$, and so we can characterize a closed set $C$ of $$\left( \undermat{n+1 \mbox{ times}}{\PP^{h_1}\times\cdots\times \PP^{h_1}}\right)\times\left( \undermat{n+1 \mbox{ times}}{\PP^{h_2}\times\cdots\times \PP^{h_2}}\right)$$
\vskip.75cm
\noindent as the zero locus of the maximal minors of $M'$ which are multi-homogeneous polynomials of the multi-graded ring $$\mathbb{K}[a_{00},\dots,a_{0h_1},\dots, a_{n0},\dots,a_{nh_1},b_{00},\dots,b_{0h_2},\dots, b_{n0},\dots,b_{nh_2}].$$
\par The complement of $C$ is  an  open subset and each point of this open subset gives a parameterization of two linear subspaces of $\PP^n$ of dimensions $h_1$ and $h_2$ respectively. For subvarieties of these two linear subspaces Theorem \ref{teoX1timesXkbign2} holds. Moreover, in Theorem \ref{teoX1timesXkbign2} we proved that such an open set is non-empty.
}
\end{oss}

\begin{oss}{\rm
Note that Theorem \ref{teoX1timesXkbign2} generalizes \cite[Theorem 4.1]{BCFL2} in two directions:  we consider not only the product of linear spaces, but also the product of degenerate subvarieties, and we also consider an ambient space of larger dimension.
}\end{oss}
\begin{oss}\label{closure}{\rm
Under the assumptions of Theorem \ref{teoX1timesXkbign2}, in the proof of the Theorem, we also showed that $X_1\hada \cdots \hada X_\ell$ which is, by definition, $\overline{\{P_1\hada \cdots \hada P_\ell|P_i\in X_i \}}$ turns out to be $\{P_1\hada\cdots \hada P_\ell|P_i\in X_i\}.$
In particular the notation $P_1\hada \cdots \hada P_\ell\in \PP^n$ is well-defined for all $P_i\in X_i$ by the genericity assumptions.}
\end{oss}
\vskip 0.5cm
\begin{oss}\label{rmkNONsingolare}{\rm
An immediate consequence of Theorem \ref{teoX1timesXkbign2} is that if $X_1,\dots,X_\ell$ are non-singular, then also $X_1\hada\cdots\hada X_\ell$ is non-singular.}
\end{oss}
Theorem \ref{teoX1timesXkbign2} yields the following Corollary which extends the dimension and the degree formulas of  \cite[Theorem 6.8]{BCK} beyond linear spaces.
\begin{cor}\label{corX1timesXkbign}
Let $\ell$ be a positive integer. Let $X_1, \dots , X_\ell$ be subvarieties of $\PP^n$ of dimension $r_1,\dots,r_\ell$ whose linear spans are generic of dimension $h_1,\dots,h_\ell$, respectively. If $n\geq (h_1+1)\cdots(h_\ell+1)-1$, then
\begin{enumerate}
  \item[{\it i)}] $dim(X_1\hada\cdots\hada X_\ell)=\sum\limits_{i=1}^{\ell} dim(X_i)$
  \item[{\it ii)}] $deg(X_1\hada\cdots\hada X_\ell)=\binom{r_1+\cdots+r_\ell}{r_1,\dots,r_\ell}\prod\limits_{i=1}^{\ell} deg(X_i)$, where $\binom{r_1+\cdots+r_\ell}{r_1,\dots,r_\ell}=\frac{(r_1+\cdots+r_\ell)!}{r_1!\cdots r_\ell!}$
  \item[{\it iii)}] $HF_{X_1\hada\cdots\hada X_\ell}=\prod\limits_{i=1}^{\ell} HF_{X_i}.$
\end{enumerate}
\end{cor}

\begin{oss}{\rm
In Example \ref{Ex2.7}, we shall see an explicit example of two varieties $X$ and $Y$ whose linear spans are not generic and whose matrix $M'$, constructed as in the proof of Theorem \ref{teoX1timesXkbign2}, will not have maximum rank. Indeed, $X\hada Y$ is neither projectively equivalent nor isomorphic to the product variety $X\times Y$. In fact, in Example \ref{Ex2.7}, $\mbox{Sing}(X\hada Y)\neq \emptyset$, even if $X$ and $Y$ are smooth.
In that example, the dimension and degree formulas still hold, but the Hilbert function formula does not hold.}
\end{oss}

Before ending this section we introduce the notion of  generically $d$-parameterized subvariety, which allows us to extend our results to the case of a small ambient space.

\begin{oss}\label{rmkgennonsingular}{\rm
Given a subvariety $Z\subset\PP^n$ of dimension $r$  we say that it has a {\em parametric representation of degree} $d$ if $Z$ is the image of the rational map $\PP^r\dashrightarrow\PP^n$ defined by $n+1$ homogeneous polynomials of degree $d$ in $r+1$ indeterminates. It is clear that the linear span of $Z$ is of dimension at most $\binom{r+d}{d}-1$. Thus $Z$ is degenerate as soon as $\binom{r+d}{d}-1<n$.
We say that $Z$ is a {\it generically $d$-parameterized subvariety}, if the $n+1$ degree $d$ homogeneous polynomials in $r+1$ indeterminates  defining it are generic. Note that, if $Z$ is generically $d$-parameterized and $n\geq \binom{r+d}{d}-1,$ then the linear span of $Z$ is of dimension $\binom{r+d}{d}-1$ and $Z$ is non-singular, in fact it is projectively equivalent to the $d$-uple Veronese embedding of $\PP^r$.

}
\end{oss}
\begin{cor}\label{corteoXtimesYgen}
Let $\ell$ be a positive integer. For $i=1,\dots,\ell$, let $r_i, d_i$ be positive integers and let $n\geq\binom{r_1+d_1}{d_1}\cdots\binom{r_\ell+d_\ell}{d_\ell}-1$. For $i=1,\dots,\ell$, let $X_i$ be a generically $d_i$-parameterized subvariety of $\PP^n$ of dimension $r_i$. Then the Hadamard product $X_1 \hada \cdots \hada X_\ell$ and the product variety $X_1\times \cdots \times X_\ell$ are projectively equivalent as subvarieties of $\PP^n$.
\end{cor}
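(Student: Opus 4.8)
The plan is to reduce the statement directly to Theorem \ref{teoX1timesXkbign} by realizing each generic parameterized subvariety as a subvariety of a generic linear subspace of the predicted dimension.

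First I would fix $h_i=\binom{r_i+d_i}{d_i}-1$ for each $i$. By definition $X_i$ is the image of $\PP^{r_i}$ under a map $[F_0:\cdots:F_n]$, where the $F_j$ are $n+1$ generic forms of degree $d_i$ in $r_i+1$ variables. Expanding each $F_j$ in the monomial basis of degree-$d_i$ forms, one has $[F_0:\cdots:F_n]=A_i\cdot\nu_{d_i}$, where $\nu_{d_i}$ is the vector of the $h_i+1$ monomials of degree $d_i$ and $A_i$ is the $(n+1)\times(h_i+1)$ matrix of coefficients of the $F_j$. Since the forms are generic, $A_i$ is a generic matrix of this size; moreover $n\geq h_i$, because the hypothesis gives $n\geq\binom{r_1+d_1}{d_1}\cdots\binom{r_\ell+d_\ell}{d_\ell}-1\geq\binom{r_i+d_i}{d_i}-1=h_i$. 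Hence $A_i$ has full column rank $h_i+1$, the linear span $L_i=A_i(\PP^{h_i})$ of $X_i$ is a linear subspace of dimension exactly $h_i$, and $X_i\subset L_i$. Crucially, $A_i$ provides a parameterization of $L_i$ of exactly the shape used in Theorem \ref{teoXtimesYbign}, whose coefficients $a_{ij}$ are literally the coefficients of the forms defining $X_i$.

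Next I would observe that the numerical hypothesis is precisely the one of Theorem \ref{teoX1timesXkbign}, namely $n\geq(h_1+1)\cdots(h_\ell+1)-1$. It then remains only to verify that $L_1,\dots,L_\ell$ are generic in the sense required there, i.e. that the associated matrix $M'$ has maximal rank. By Remark \ref{rmkM'} this is an open dense condition, cut out by the non-vanishing of the maximal minors of $M'$, which are multi-homogeneous polynomials in the parameterizing coefficients $a_{ij},b_{ij},\dots$. But these coefficients are exactly the independent generic coefficients of the forms defining $X_1,\dots,X_\ell$, so the whole parameter point lies in the complement of the proper closed locus where those minors vanish, and therefore $M'$ has maximal rank. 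Having placed each $X_i$ in a generic $L_i$ with $M'$ of maximal rank, I would invoke Theorem \ref{teoX1timesXkbign} to conclude that $X_1\hada\cdots\hada X_\ell$ is projectively equivalent to $X_1\times\cdots\times X_\ell$.

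The point requiring the most care is the genericity transfer in the previous paragraph: one must be certain that a generic choice of the defining forms really does land the tuple $(L_1,\dots,L_\ell)$ in the open locus where $M'$ has maximal rank. This does not follow from the genericity of each $L_i$ in isolation, but it does follow here because the entries of $M'$ are products of the coefficients $a_{ij},b_{ij},\dots$, all of which are independent generic parameters coming from the defining forms; thus the corresponding point of the parameter space is generic and avoids the proper closed bad locus identified in Remark \ref{rmkM'}. Everything else is a direct bookkeeping matching of $h_i$ with $\binom{r_i+d_i}{d_i}-1$ and an appeal to the already proved Theorem \ref{teoX1timesXkbign}.
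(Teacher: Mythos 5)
Your proof is correct and follows essentially the same route as the paper's: observe that each generic parameterized $X_i$ lies in a linear subspace $L_i$ of dimension $\binom{r_i+d_i}{d_i}-1$ spanned by it, and then invoke Theorem \ref{teoX1timesXkbign}. In fact you are more careful than the paper, which leaves implicit the genericity transfer (that generic coefficients of the defining forms yield tuples $(L_1,\dots,L_\ell)$ avoiding the closed locus of Remark \ref{rmkM'} where $M'$ drops rank); your explicit justification of this point is exactly the right way to fill that gap.
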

\begin{proof}
For $i=1,\dots,\ell$, assume that $X_i$ has parametric equations given by
\[
X_i:\begin{cases}
x_0=f_{i0}(y_{i0}, \dots, y_{ir_i})\\
x_1=f_{i1}(y_{i0}, \dots, y_{ir_i})\\
\qquad  \qquad \vdots \\
x_n=f_{in}(y_{i0}, \dots, y_{ir_i})\\
\end{cases}
\]
where $f_{ij}(y_{i0}, \dots, y_{ir_i})\in \mathbb{K}[y_{i0}, \dots, y_{ir_i}]_{d_i}$
, for $j=0, \dots, n$.
\par Since $dim_{\mathbb{K}} \left(\mathbb{K}[y_{i0}, \dots, y_{ir_i}]_{d_i}\right)=\binom{r_i+d_i}{d_i}$, then the linear span of $X_i$ is of dimension $\binom{r_i+d_i}{d_i}-1$.
\par Therefore, by Theorem \ref{teoX1timesXkbign2}, we have that $X_1 \hada \cdots \hada X_\ell$ and $X_1\times \cdots \times X_\ell$ are projectively equivalent as subvarieties of $\PP^n$.
\end{proof}

Corollary \ref{corteoXtimesYgen} easily yields the following Corollary.
\begin{cor}\label{corXtimesYgen}
Let $\ell$ be a positive integer. For $i=1,\dots,\ell$, let $r_i, d_i,$ be positive integers and let $n\geq\binom{r_1+d_1}{d_1}\cdots\binom{r_\ell+d_\ell}{d_\ell}-1$. For $i=1,\dots,\ell$, let $X_i$ be a generically $d_i$-parameterized subvariety of $\PP^n$ of dimension $r_i$. Then:
\begin{enumerate}
  \item[{\it i)}] $dim(X_1\hada\cdots\hada X_\ell)=\sum\limits_{i=1}^{\ell} dim(X_i)$
  \item[{\it ii)}] $deg(X_1\hada\cdots\hada X_\ell)=\binom{r_1+\cdots+r_\ell}{r_1,\dots,r_\ell}\prod\limits_{i=1}^{\ell} deg(X_i)$
  \item[{\it iii)}] $HF_{X_1\hada\cdots\hada X_\ell}=\prod\limits_{i=1}^{\ell} HF_{X_i}$
  \item[{\it iv)}] $X_1\hada\cdots\hada X_\ell$ is non-singular.
\end{enumerate}
\end{cor}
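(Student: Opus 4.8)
The plan is to derive all four statements directly from Corollary~\ref{corteoXtimesYgen}, which asserts that under the stated hypotheses $X_1\hada\cdots\hada X_\ell$ is projectively equivalent, as a subvariety of $\PP^n$, to the product variety $X_1\times\cdots\times X_\ell$ embedded via the iterated Segre map. Since a projective equivalence of $\PP^n$ is induced by a linear isomorphism of its homogeneous coordinate ring, it preserves the dimension, the degree, the Hilbert function of the coordinate ring, and the smooth/singular locus. Therefore it suffices to verify i)--iv) for the product variety under its Segre embedding, each being a classical fact about products. This is exactly the reasoning that yields Corollary~\ref{corXtimesYbign} from Theorem~\ref{teoXtimesYbign}, now applied through Corollary~\ref{corteoXtimesYgen}.

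For i), the dimension of a product of varieties is the sum of the dimensions, which is immediate. For ii), I would argue by induction on $\ell$, reducing to the two-factor case: if $V\subset\PP^a$ and $W\subset\PP^b$ have dimensions $r,s$ and degrees $d_V,d_W$, then the Segre image of $V\times W$ has dimension $r+s$ and degree $\binom{r+s}{r}d_Vd_W$, the binomial coefficient recording the ways of distributing $r+s$ generic hyperplane sections among the two factors. Iterating this identity over the $\ell$ factors telescopes the product of successive binomial coefficients $\binom{r_1+\cdots+r_j}{r_1+\cdots+r_{j-1}}$ into the single multinomial coefficient $\binom{r_1+\cdots+r_\ell}{r_1,\dots,r_\ell}$ and produces the stated product of degrees.

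For iii), the key point is that the Segre embedding realizes the homogeneous coordinate ring of $X_1\times\cdots\times X_\ell$ as the Segre product of the coordinate rings of the factors, so that its degree-$t$ graded piece is the tensor product of the degree-$t$ pieces; taking $\mathbb{K}$-dimensions gives $HF_{X_1\times\cdots\times X_\ell}(t)=\prod_{i=1}^{\ell} HF_{X_i}(t)$, and transporting this through the projective equivalence yields iii). For iv), each generic parameterized $X_i$ is non-singular by Remark~\ref{rmkgennonsingular} (being projectively equivalent to a Veronese embedding of $\PP^{r_i}$), hence the product $X_1\times\cdots\times X_\ell$ is smooth, and so is the projectively equivalent Hadamard product.

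The facts underlying i) and iv) (dimension and smoothness of products) are immediate. The step requiring the most care is ii), namely keeping track of the multinomial coefficient through the induction, together with checking in iii) that the embedding transported by Corollary~\ref{corteoXtimesYgen} is genuinely the full Segre embedding, so that the grading of the coordinate ring matches the product of the Hilbert functions rather than a re-graded version of it.
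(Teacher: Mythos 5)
Your proposal is correct and takes essentially the same route as the paper: the paper also deduces all four items from the projective equivalence of Corollary~\ref{corteoXtimesYgen} (equivalently, by placing each $X_i$ inside its generic linear span of dimension $\binom{r_i+d_i}{d_i}-1$ and invoking Corollary~\ref{corX1timesXkbign} together with Remarks~\ref{rmkNONsingolare} and~\ref{rmkgennonsingular} for smoothness). The only cosmetic difference is in how the classical degree fact is justified: the paper reads it off from the leading coefficient of the Hilbert function product, while you count generic hyperplane sections on the Segre image; both are standard and equivalent.
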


\section{Small ambient space}
\par Recall that, for $X$ and $Y$ subvarieties of $\PP^n$, we set $N=(h+1)(k+1)-1$, where $h$ and $k$ are the dimensions of the linear spans of $X$ and $Y$, respectively. This becomes $N=\binom{r+d_X}{d_X}\binom{s+d_Y}{d_Y}-1$ when $X$ and $Y$ are two generically $d_X$-parameterized and $d_Y$-parameterized subvarieties of $\PP^n$ of dimensions $r$, $s$, respectively.
\par In the previous section, for $n\geq N$, we determined the dimension, the degree and the Hilbert function of the Hadamard product in terms of the same invariants of the factors.
\par Now we consider the range $N-\left(\binom{r+d_X}{d_X}+\binom{s+d_Y}{d_Y}-2\right)\leq n\leq N-1$ in the case of generically $d_X$-parameterized and $d_Y$-parameterized subvarieties. We will see that the dimension and the degree formulas still hold, but the relation on the Hilbert functions fails. Moreover, the Hadamard product can be a singular variety, even if the factors are smooth.
In order to study Hadamard products in a small ambient space we use Segre-Veronese varieties (\cite{CGG}), thus we briefly recall some basic notation about them.
\par Let $\ell$ be a positive integer.
Let $r_1,\dots,r_\ell,d_1,\dots, d_\ell$ be positive integers and set $N=\binom{r_1+d_1}{d_1}\cdots\binom{r_\ell+d_\ell}{d_\ell}-1$. We denote by $S$ the image in $\PP^N$ of a Segre-Veronese embedding of type $(d_1,\dots,d_\ell)$ from $\PP^{r_1}\times\cdots\times\PP^{r_\ell}$ to $\PP^N$.

\begin{teo}\label{teoXhadaYN-1}
Let $r, s, d_X, d_Y$ be positive integers, let $N=\binom{r+d_X}{d_X}\binom{s+d_Y}{d_Y}-1$ and $N-\left(\binom{r+d_X}{d_X}+\binom{s+d_Y}{d_Y}-2\right)\leq n\leq N-1$. Let $X$ and $Y$ be two generically $d_X$-parameterized and $d_Y$-parameterized subvarieties of $\PP^n$ of dimensions $r$, $s$, respectively.
If $n> r+s$, then:
\begin{enumerate}
  \item[{\it i)}] $dim(X\hada Y)=dim(X)+dim(Y)$
  \item[{\it ii)}] $deg(X\hada Y)=\binom{r+s}{s}deg(X)deg(Y)$.
\end{enumerate}
\end{teo}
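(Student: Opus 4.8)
The plan is to exhibit $X\hada Y$ as a linear projection of a Segre--Veronese variety $S\subset\PP^N$ and to read off its dimension and degree by intersection theory on $\PP^r\times\PP^s$, the only genuinely delicate point being that this projection is birational onto its image. First I would fix parametric equations $x_j=f_j(y_0,\dots,y_r)$ and $x_j=g_j(z_0,\dots,z_s)$ with generic forms $f_j\in\mathbb{K}[y]_{d_X}$ and $g_j\in\mathbb{K}[z]_{d_Y}$. Since $n$ exceeds $\binom{r+d_X}{d_X}-1$ and $\binom{s+d_Y}{d_Y}-1$ (this is forced by $n\geq N-(r+s)$), the $f_j$ (resp. $g_j$) span the whole space of forms, so $X$ and $Y$ are projectively equivalent to Veronese varieties, in particular smooth and of the expected degrees. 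The Hadamard product is then the image of the rational map $\mu\colon\PP^r\times\PP^s\to\PP^n$, $(p,q)\mapsto[f_0(p)g_0(q):\dots:f_n(p)g_n(q)]$, whose components are the decomposable bidegree-$(d_X,d_Y)$ forms $f_j\otimes g_j$. Writing $V=\langle f_0\otimes g_0,\dots,f_n\otimes g_n\rangle$ inside the $(N+1)$-dimensional space of all such forms, $\mu$ is the composite of the Segre--Veronese embedding $\PP^r\times\PP^s\xrightarrow{\ \sim\ }S$ with the linear projection $\pi_\Lambda$ of $\PP^N$ away from $\Lambda=\PP(V^\perp)$, where $\dim\Lambda=N-n-1$. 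The essential feature is that $\Lambda$ is \emph{not} a generic subspace: it is cut out by decomposable tensors, and this is precisely what will later permit singularities and destroy the Hilbert-function identity.

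Next I would check that $\mu$ is a morphism, i.e.\ that $\Lambda\cap S=\emptyset$. A point $[v_{d_X}(p)\otimes v_{d_Y}(q)]\in S$ lies on $\Lambda$ exactly when $f_j(p)g_j(q)=0$ for every $j$; setting $A=\{j:f_j(p)=0\}$ and $B=\{0,\dots,n\}\setminus A$, this forces $g_j(q)=0$ for all $j\in B$. By genericity, $|A|$ of the $f_j$ have a common zero on $\PP^r$ only if $|A|\le r$, and likewise one needs $|B|\le s$; but then $|A|+|B|\le r+s$, contradicting $|A|+|B|=n+1>r+s$. Hence no such partition exists, $\mu$ has no base points, and it is a morphism $\PP^r\times\PP^s\to\PP^n$.

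Since $\mu$ is base-point-free with components of bidegree $(d_X,d_Y)$, one has $\mu^*\mathcal{O}_{\PP^n}(1)=\mathcal{O}_{\PP^r\times\PP^s}(d_X,d_Y)$, so with $H_1,H_2$ the two hyperplane classes
\[
\int_{\PP^r\times\PP^s}(d_XH_1+d_YH_2)^{r+s}=\binom{r+s}{r}d_X^{\,r}d_Y^{\,s}=\binom{r+s}{s}\deg(X)\deg(Y).
\]
Because this number is nonzero, the top self-intersection of $\mu^*\mathcal{O}(1)$ does not vanish, which forces $\dim\mu(\PP^r\times\PP^s)=r+s$ and proves (i). Moreover the projection formula gives
\[
\deg(\mu)\cdot\deg(X\hada Y)=\binom{r+s}{s}\deg(X)\deg(Y),
\]
so (ii) is equivalent to the assertion that $\mu$ is birational onto its image, $\deg(\mu)=1$.

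The main obstacle is therefore this birationality, equivalently that $\pi_\Lambda$ identifies no two generic points of $S$. Two points of $S$ have the same image exactly when their secant line meets $\Lambda$, so the coincidence locus $I=\{((p,q),(p',q'))\ :\ (f_j(p)g_j(q))_j\text{ and }(f_j(p')g_j(q'))_j\text{ are proportional}\}\subset(\PP^r\times\PP^s)^2$ is, away from the diagonal, the rank-$\le 1$ locus of a $2\times(n+1)$ matrix, hence of expected codimension $n$. Since $2(r+s)-n<r+s$ precisely when $n>r+s$, its expected dimension off the diagonal is too small to dominate the first factor, so the generic fibre of $\mu$ is a single point and $\deg(\mu)=1$. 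The hard part is to prove that $I$ really has this expected dimension in spite of $\Lambda$ being special; I would deduce it from the genericity of the $f_j,g_j$ — for instance by an incidence-variety count over the parameter space of such forms, showing that the locus where $\Lambda$ meets the secant variety $\sigma_2(S)$ in excess dimension is a proper closed subset (equivalently, exhibiting one admissible configuration for which $\pi_\Lambda|_S$ is generically injective and invoking the openness of the condition $\deg(\mu)=1$). This is exactly where the hypothesis $n>r+s$ is used, and it also explains why only the dimension and degree survive: the finitely many secants through the special centre $\Lambda$ produce the singular points of $X\hada Y$ without changing its degree.
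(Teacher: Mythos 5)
Your setup coincides with the paper's: realize $X\hada Y$ as the image of the Segre--Veronese variety $S\subset\PP^N$ under the projection $\pi_\Lambda$ from the center $\Lambda$ dual to the span of the points $[f_j\otimes g_j]$, check $\Lambda\cap S=\emptyset$, and reduce everything to properties of $\pi_\Lambda|_S$. Your part {\it i)} is complete (the non-vanishing of the top self-intersection of $\mu^*\mathcal{O}(1)$ is a clean argument that does not even need genericity of $\Lambda$), but part {\it ii)} is reduced to $\deg(\mu)=1$, and this birationality is precisely what you do not prove: you call it ``the hard part'' and offer only a program. Moreover, that program rests on a false premise. You assert that $\Lambda$ is \emph{not} a generic subspace because it is the annihilator of a span of decomposable tensors; in the range $n\geq N-(r+s)$ this is wrong, and recognizing that it is wrong is the paper's key step. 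The paper argues as follows: a generic $n$-plane $L\subset\PP^N$ is cut out by $N-n\leq r+s$ generic hyperplanes, so the successive hyperplane sections of $S$ remain positive-dimensional and, by \cite[Proposition 18.10]{Harris}, nondegenerate; hence $S\cap L$ contains $n+1$ points spanning $L$. Since any $n+1$ points of $S$ arise as $[f_j\otimes g_j]$ for a suitable pair of parameterizations, the map sending a pair of parameterizations to the span $\langle P_0,\dots,P_n\rangle$ in the Grassmannian is dominant, so for generic $f_j,g_j$ the center $\Lambda$ \emph{is} a generic $(N-n-1)$-plane. Birationality of $\pi_\Lambda|_S$ is then the classical generic-projection theorem (projection from a generic center to $\PP^n$ with $n>\dim S$ is birational onto its image), and {\it ii)} follows. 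Note that the hypothesis $n\geq N-(r+s)$, which you used only to see that $X$ and $Y$ are Veronese re-embeddings, is exactly what makes this genericity true: for smaller $n$ a generic $n$-plane misses $S$ entirely, so spans of points of $S$ are never generic, and the argument (and the theorem) genuinely stops there.

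Two smaller points. First, your diagnosis that the specialness of $\Lambda$ ``permits singularities and destroys the Hilbert-function identity'' is misdirected: both phenomena occur with $\Lambda$ generic --- the singularities because for $n<\dim(\sigma_2(S))$ \emph{every} $(N-n-1)$-plane meets $\sigma_2(S)$ (Proposition \ref{propsing}), and the Hilbert-function failure for elementary span reasons (Remark \ref{rmkHF}). Consequently your proposed route of showing that ``$\Lambda$ meets $\sigma_2(S)$ in excess dimension only on a proper closed locus'' aims at the wrong target: meeting $\sigma_2(S)$ is unavoidable in part of the range, and generic injectivity must instead come from $\Lambda$ avoiding the cone of secant lines through a \emph{general} point of $S$, which is where $n>r+s$ enters. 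Second, a degree-bookkeeping discrepancy you must resolve: your intersection number is $\binom{r+s}{r}d_X^{\,r}d_Y^{\,s}$ (the actual degree of $S$), which you then silently equate with $\binom{r+s}{s}d_Xd_Y$; under the statement's convention, where $d_X,d_Y$ denote the degrees of the parameterizing forms, these agree only when the variety degree equals the form degree (e.g.\ $r=s=1$, or a linear factor), so you need to say explicitly which notion of degree your chain of equalities is using.
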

\begin{proof}
Consider the Segre-Veronese embedding of type $(d_X,d_Y)$ from $\PP^r\times \PP^s$ to $\PP^N$ and let $S$ be its image.
\par Assume that $X$  and $Y$ have parametric equations given respectively  by
\[
X:\begin{cases}
x_0=f_0(y_0, \dots, y_{r})\\
x_1=f_1(y_0, \dots, y_{r})\\
\qquad  \qquad \vdots \\
x_{n}=f_{n}(y_0, \dots, y_{r})\\
\end{cases}
Y:\begin{cases}
x_0=g_0(z_0, \dots, z_{s})\\
x_1=g_1(z_0, \dots, z_{s})\\
\qquad  \qquad \vdots \\
x_{n}=g_{n}(z_0, \dots, z_{s})
\end{cases}
\]
where $f_i(y_0, \dots, y_{r})\in \mathbb{K}[y_0, \dots, y_{r}]_{d_X}$ and $g_i(z_0, \dots, z_{s})\in \mathbb{K}[z_0, \dots, z_{s}]_{d_Y}$, for $i=0, \dots, {n}$.
\par Observe that, for each $i=0, \dots, {n}$, the form $f_ig_i$ has bi-degree $(d_X,d_Y)$ in $\mathbb{K}[y_0, \dots, y_{r},z_0, \dots, z_{s}]$. Since $\mathbb{K}[y_0, \dots, y_{r},z_0, \dots, z_{s}]_{(d_X,d_Y)}$ has dimension $N+1$, then, for each $i=0, \dots, {n}$, $f_ig_i$ defines a point $P_i$ of $(\PP^N)^{^*}$.
\par Since $X$ and $Y$ are generically $d_X$-parameterized and $d_Y$-parameterized subvarieties, the linear span of the points $P_0,\dots,P_{n}$ is of dimension $n$.
\par Consider the $(n+1)\times(N+1)$ matrix $M'$ whose rows are the coordinates of the points $P_0,\dots,P_{n}$. Again since $X$ and $Y$ are generically $d_X$-parameterized and $d_Y$-parameterized subvarieties, $M'$ has maximum rank, hence it defines a projection $\pi$ from $\PP^N$ to $\PP^{n}$ whose center we call $\Lambda$. Note that $ dim(\Lambda)=N-n-1$ and the linear span of the points $P_0,\dots,P_n$ is the dual of $\Lambda$.
\par In order to show the genericity of $\Lambda$ consider the Segre variety $T\subseteq (\PP^N)^{^*}$ defined as the image of the Segre-embedding

{\small{\[
\PP(\mathbb{K}[y_0,\dots,y_r]_{d_X})\times \PP(\mathbb{K}[z_0,\dots,z_s]_{d_Y})\hookrightarrow \PP(\mathbb{K}[y_0,\dots,y_r,z_0,\dots,z_s]_{(d_X,d_Y)}).\]}}

\par Now, any pair of generic parameterizations defines $n+1$ points (the $P_0,\dots,P_n$ above) of $(\PP^N)^{^*}$ belonging to $T$ whose linear span is of dimension $n$. Conversely, any $n+1$ points of $T$ can be obtained from parameterizations (with suitable coefficients) of two subvarieties of $\PP^{n}$ with parametric representation (of the given dimensions and degrees).
\par On the other hand, for any generic linear subspace $L$ of $(\PP^N)^{^*}$ of dimension $n$, defined by $N-n$ generic hyperplanes $H_1,\dots,H_{N-n}$, we shall consider $T_i=T\cap H_1\cap \cdots \cap H_i$. Since $n\geq N-dim(T)=N-\left(\binom{r+d_X}{d_X}+\binom{s+d_Y}{d_Y}-2\right)$, we have that $dim(T_i)\geq 2$ for all $i=1,\dots,N-n-2$ and $dim(T_{N-n-1})\geq 1$. Therefore by \cite[Proposition 18.10]{Harris}, $T_{N-n}$ contains at least $n+1$ points which generate $L$. Thus we may assume that the linear subspaces of $(\PP^N)^{^*}$ of dimension $n$ generated by $n+1$ points of $T$ are generic, and so $\Lambda$ is generic as well.

\par For  $n\geq r+s=dim(S)$, since $\Lambda$ is generic,  we have  $dim(\pi(S))=dim(S)=r+s$. Since $n>r+s$, we also have $\pi(S)\neq\PP^n$, and so the projection $\pi_{\vert_S}:S\to\pi(S)$ is a birational map. Hence $deg(\pi(S))=deg(S)=\binom{r+s}{s}d_Xd_Y$.
\par Set $\Sigma=\{P\hada Q|P\in X,Q\in Y\}$. It is easy to see that $\pi(S)\subseteq \Sigma \subseteq X\hada Y$. Since $r+s=dim (\pi(S))\leq dim (X\hada Y)\leq r+s,$ we have that $\pi(S)=X\hada Y$, and so $dim(X\hada Y)=dim(\pi(S))=r+s$ and $deg(X\hada Y)=deg(\pi(S))=\binom{r+s}{s}d_Xd_Y$.

\end{proof}

\begin{oss}{\rm
As in Remark \ref{closure}, under the assumptions of Theorem \ref{teoXhadaYN-1}, we also proved that $X\hada Y$ which is, by definition, $\overline{\{P\hada Q|P\in X,Q\in Y\}}$ turns out to be $\{P\hada Q|P\in X,Q\in Y\}.$
}
\end{oss}
\vskip 0.5cm
In order to make Theorem \ref{teoXhadaYN-1} more effective, we can find  explicit numerical conditions on $X$ and $Y$ so that $n\geq N-\left(\binom{r+d_X}{d_X}+\binom{s+d_Y}{d_Y}-2\right)$  yields $n> r+s$.
\begin{lemm}
	Using the notations of Theorem \ref{teoXhadaYN-1}, we have that:
if $(d_X,d_Y,r,s)$ is in the following table, then $N-\left(\binom{r+d_X}{d_X}+\binom{s+d_Y}{d_Y}-2\right)> r+s$.
		\begin{center}
			\begin{tabular}{|c|c|c|c|}
				\hline
				$d_X$&$d_Y$ &$r$ &$s$ \\ \hline
				$\geq 3$ &$\forall$ &$\forall$ &$\forall$  \\  \hline
				$\forall$ & $\geq 3$ & $\forall$ &$\forall$  \\  \hline
				$2$ &$\geq 2$ &$\forall$ &$\forall$  \\  \hline
				$2$ & $1$  &$\forall$ & $\geq 2$ \\  \hline
				$2$ &$1$ &$\geq 2$ &$1$  \\  \hline
				$\geq 2$ &$2$ &$\forall$ &$\forall$  \\  \hline
				$1$ & $2$ & $\geq 2$ &$\forall$  \\  \hline
				$1$ &$2$ &$1$ &$\geq 2$  \\  \hline
				$1$ &$1$ &$\geq 3$ & $\geq 2$ \\  \hline
				$1$ &$1$ &$\geq 2$ & $\geq 3$ \\  \hline
			\end{tabular}
		\end{center}
\end{lemm}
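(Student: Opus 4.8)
The plan is to observe that the inequality to be established, $N-(r+s)>r+s$, is equivalent to $N>2(r+s)$, and since $N=\binom{r+d_X}{d_X}\binom{s+d_Y}{d_Y}-1$, it is in turn equivalent to
\[
\binom{r+d_X}{d_X}\binom{s+d_Y}{d_Y}>2(r+s)+1.
\]
Thus the whole lemma reduces to verifying this single inequality under each of the four sets of hypotheses in the table. Writing $A=\binom{r+d_X}{d_X}$ and $B=\binom{s+d_Y}{d_Y}$, I would use throughout that $\binom{r+d}{d}$ is strictly increasing in $d$ for fixed $r\geq 1$ (the ratio of consecutive values is $\frac{r+d+1}{d+1}>1$), and that $\binom{r+1}{1}=r+1$.

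For the first row ($d_X\geq 2$, the remaining parameters being arbitrary, i.e.\ $\geq 1$), I would bound $A\geq\binom{r+2}{2}=\tfrac{(r+1)(r+2)}{2}$ and $B\geq s+1$, giving
\[
AB\geq\frac{(r+1)(r+2)}{2}(s+1).
\]
Clearing the denominator, the target inequality $AB>2(r+s)+1$ follows from $(r^2+3r+2)(s+1)>4(r+s)+2$, which after expanding and cancelling reduces to the manifestly positive expression $s(r^2+3r-2)+r(r-1)>0$: indeed $r^2+3r-2\geq 2$ for $r\geq 1$ and $r(r-1)\geq 0$, so with $s\geq 1$ the sum is at least $2$. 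The second row is obtained from this computation by interchanging the roles of $(r,d_X)$ and $(s,d_Y)$, so no new work is required.

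For the third and fourth rows ($d_X=d_Y=1$) one has $A=r+1$ and $B=s+1$ exactly, so $AB=(r+1)(s+1)$ and the target inequality becomes $(r+1)(s+1)>2(r+s)+1$, that is, $(r-1)(s-1)>1$. For the third row ($r\geq 3$, $s\geq 2$) this reads $(r-1)(s-1)\geq 2\cdot 1=2>1$, and for the fourth row ($r\geq 2$, $s\geq 3$) symmetrically $(r-1)(s-1)\geq 1\cdot 2=2>1$. This last case is the only genuinely delicate point, and I expect it to be where the argument is tightest: when both degrees equal $1$ the binomial coefficients grow only linearly, so $AB$ barely exceeds $2(r+s)+1$, and the borderline small-dimension configurations must be excluded. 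This is exactly what forces the precise thresholds $r\geq 3,\ s\geq 2$ (and their mirror image) appearing in the last two rows of the table.
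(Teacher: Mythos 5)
Your proof is correct: the reduction to $\binom{r+d_X}{d_X}\binom{s+d_Y}{d_Y}>2(r+s)+1$, the monotonicity bound for the first two rows, and the identity $(r+1)(s+1)>2(r+s)+1\Leftrightarrow(r-1)(s-1)>1$ for the $d_X=d_Y=1$ rows all check out. The paper states this lemma without any proof, treating it as a routine numerical verification, and your argument is exactly that verification carried out in full, so there is nothing to contrast.
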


\vskip 0.5cm
\begin{oss}\label{rmkHF}{\rm
Notice that, in the hypotheses of Theorem \ref{teoXhadaYN-1}, we have $HF_{X\hada Y}\not=HF_XHF_Y$. In fact, since $X$ is not contained in a linear subspace of dimension less than $\binom{r+d_X}{d_X}-1$ and similarly $Y$, we have
$$
HF_X(1)=HF{_{\PP^{\binom{r+d_X}{d_X}-1}}}(1)=\binom{r+d_X}{d_X}
$$
and
$$
HF_Y(1)=HF{_{\PP^{\binom{s+d_Y}{d_Y}-1}}}(1)=\binom{s+d_Y}{d_Y}
$$
and so
$$
HF_X(1)HF_Y(1)=\binom{r+d_X}{d_X}\binom{s+d_Y}{d_Y}> N \geq HF_{X\hada Y}(1).
$$}
\end{oss}
\begin{oss}{\rm
In Remark \ref{rmkM'} we saw that $M'$ being of maximum rank is sufficient to have the formulas for the dimension, the degree and the Hilbert function, when $n\geq N$. When $n<N$, besides the failure of the Hilbert function formula (Remark \ref{rmkHF}), $M'$ of maximum rank does not grant the degree formula, as Example \ref{Ex3.6} shows.
}
\end{oss}
Using a similar technique to that contained in the proof of Theorem \ref{teoXhadaYN-1}, we can extend such Theorem to a finite number of subvarieties.

\begin{teo}\label{teoX1hadaXkN-1}
Let $\ell$ be a positive integer. For $i=1,\dots,\ell$, let $r_i, d_i,$ be positive integers, let $N=\binom{r_1+d_1}{d_1}\cdots\binom{r_\ell+d_\ell}{d_\ell}-1$ and $N-\left(\binom{r_1+d_1}{d_1}+\cdots+\binom{r_\ell+d_\ell}{d_\ell}-\ell\right)\leq n\leq N-1$. For $i=1,\dots,\ell$, let $X_i$ be a generically $d_i$-parameterized subvariety of $\PP^{n}$ of dimension $r_i$.
If $n> r_1+\cdots+r_\ell$, then:
\begin{enumerate}
  \item[{\it i)}] $dim(X_1\hada\cdots\hada X_\ell)=\sum\limits_{i=1}^{\ell} dim(X_i)$
  \item[{\it ii)}] $deg(X_1\hada\cdots\hada X_\ell)=\binom{r_1+\cdots+r_\ell}{r_1,\dots,r_\ell}\prod\limits_{i=1}^{\ell} deg(X_i)$.
\end{enumerate}
\end{teo}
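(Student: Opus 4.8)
The plan is to mirror the proof of Theorem \ref{teoXhadaYN-1}, replacing the two-factor Segre--Veronese construction by the $\ell$-fold one; a direct induction on $\ell$ seems awkward here, because an intermediate Hadamard product $X_1\hada\cdots\hada X_{\ell-1}$ need not be generic parameterized and the relevant bound $N-(r_1+\cdots+r_\ell)\le n$ shifts under grouping, whereas the direct approach sidesteps both issues. So I would first fix the Segre--Veronese embedding of type $(d_1,\dots,d_\ell)$ of $\PP^{r_1}\times\cdots\times\PP^{r_\ell}$ into $\PP^N$ and call its image $S$; set $R=r_1+\cdots+r_\ell$, so that the standard computation gives $\dim S=R$ and $\deg S=\binom{r_1+\cdots+r_\ell}{r_1,\dots,r_\ell}\prod_{i=1}^\ell d_i$. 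Writing each $X_i$ through generic forms $f_{ij}\in\mathbb{K}[y_{i0},\dots,y_{ir_i}]_{d_i}$ for $j=0,\dots,n$, I would attach to each coordinate index $j$ the product $F_j=\prod_{i=1}^\ell f_{ij}$, a multiform of multidegree $(d_1,\dots,d_\ell)$; since the space of such multiforms has dimension $N+1$, each $F_j$ determines a point $P_j\in\PP^N$ lying on $S$, the verification being identical to the two-factor case.

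Next I would assemble the $(n+1)\times(N+1)$ matrix $M'$ whose rows are $P_0,\dots,P_n$. Genericity of the parameterizations forces $M'$ to have maximal rank $n+1$, so $M'$ defines a projection $\pi\colon\PP^N\dashrightarrow\PP^n$ whose center $\Lambda$ is the $(N-n-1)$-plane dual to the span of $P_0,\dots,P_n$. A direct computation identical to the one in Theorem \ref{teoXhadaYN-1} shows that $\pi$ composed with the Segre--Veronese parameterization sends a point to the evaluation $\bigl(\prod_i f_{i0}:\dots:\prod_i f_{in}\bigr)$, that is, to $P_1\hada\cdots\hada P_\ell$ with $P_i\in X_i$; hence $\pi(S)\subseteq\{P_1\hada\cdots\hada P_\ell\mid P_i\in X_i\}\subseteq X_1\hada\cdots\hada X_\ell$.

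The heart of the argument, and the step I expect to be the main obstacle, is showing that $\Lambda$ is a generic $(N-n-1)$-plane, since only then does projecting $S$ from $\Lambda$ preserve dimension and degree. Here I would generalize the slicing argument: for a generic $n$-plane $L$ cut out by $N-n$ generic hyperplanes $H_1,\dots,H_{N-n}$, set $S_i=S\cap H_1\cap\cdots\cap H_i$; the hypothesis $n\ge N-R$ guarantees $\dim S_i\ge 2$ for $i\le N-n-2$ and $\dim S_{N-n-1}\ge 1$, so by \cite[Proposition 18.10]{Harris} the final section $S_{N-n}$ contains at least $n+1$ points of $S$ spanning $L$. This sets up the correspondence between generic parameterizations and generic $n$-planes spanned by points of $S$, whence $\Lambda$ is generic. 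The delicate point, exactly as for $\ell=2$, is verifying that the successive sections of the $\ell$-fold $S$ keep the asserted positive dimensions down to the penultimate slice; this is precisely where the bound $N-(r_1+\cdots+r_\ell)\le n$ enters.

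Finally, with $\Lambda$ generic and $R=\dim S\le n$, the center is disjoint from $S$, so $\pi|_S$ is a morphism preserving dimension, $\dim\pi(S)=R$; and since $n>R$ by hypothesis, $\pi(S)\ne\PP^n$ and $\pi|_S$ is birational onto its image, giving $\deg\pi(S)=\deg S$. Combining this with the inclusions of the second paragraph and the squeeze $R=\dim\pi(S)\le\dim(X_1\hada\cdots\hada X_\ell)\le R$ yields $\pi(S)=X_1\hada\cdots\hada X_\ell$, which produces (i) $\dim(X_1\hada\cdots\hada X_\ell)=r_1+\cdots+r_\ell$ and (ii) $\deg(X_1\hada\cdots\hada X_\ell)=\binom{r_1+\cdots+r_\ell}{r_1,\dots,r_\ell}\prod_{i=1}^\ell d_i$, as wanted.
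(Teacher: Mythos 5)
Your proposal is correct and follows essentially the same route as the paper: the paper gives no separate proof of this theorem, saying only that it follows ``using a similar technique to that contained in the proof of Theorem \ref{teoXhadaYN-1}'', and your argument is exactly that technique carried out for the $\ell$-fold Segre--Veronese embedding (the points $P_j$ obtained from the products $\prod_i f_{ij}$, the projection $\pi$ with center $\Lambda$ dual to their span, the slicing argument via \cite[Proposition 18.10]{Harris} to show $\Lambda$ may be assumed generic, and birationality of $\pi|_S$ from $n>\sum_i r_i$). Your opening remark that a naive induction on $\ell$ would be awkward is also consistent with the paper's implicit choice of the direct generalization rather than the inductive scheme it used for Theorem \ref{teoX1timesXkbign}.
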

Now we provide a numerical condition for the Hadamard product to be smooth and we give an estimate on how big the singular locus is when singularities occur. In order to do this we will use the variety of secant lines to a subvariety $\mathcal{S}$ that we denote by $\sigma_2(\mathcal{S})$. It is nothing but the closure of the union of the lines joining two distinct points of $\mathcal{S}$.
\par Notice that, for $n$ in our range, when using generically $d$-parameterized subvarieties of $\PP^n$, we are dealing with smooth varieties, as the following Proposition shows.
\begin{prop}\label{Lemmasmooth}
Let $r, s, d_X, d_Y$ be positive integers, let $N=\binom{r+d_X}{d_X}\binom{s+d_Y}{d_Y}-1$ and $N-\left(\binom{r+d_X}{d_X}+\binom{s+d_Y}{d_Y}-2\right) \leq n\leq N-1$. Let $X$ and $Y$ be two generically $d_X$-parameterized and $d_Y$-parameterized subvarieties of $\PP^n$ of dimensions $r$, $s$, respectively. Then $X$ and $Y$ are non-singular.
\end{prop}
\begin{proof}
We only prove that $X$ is non-singular (similarly for $Y$).
\par By Remark \ref{rmkgennonsingular} $X$ is non-singular for $n\geq \binom{r+d_X}{d_X}-1$ and we will prove that this is always the case.
To this end, observe that $\binom{s+d_Y}{d_Y}\geq 2$ and so
\[
\binom{s+d_Y}{d_Y}\left(\binom{r+d_X}{d_X}-1\right)\geq 2 \left(\binom{r+d_X}{d_X}-1\right),
\]
thus
\[
n\geq N-\left(\binom{r+d_X}{d_X}+\binom{s+d_Y}{d_Y}-2\right) =
\]
\[
 \binom{r+d_X}{d_X}\binom{s+d_Y}{d_Y}-1 - \left(\binom{r+d_X}{d_X}+\binom{s+d_Y}{d_Y}-2\right) \geq \binom{r+d_X}{d_X}-1.
\]
\end{proof}

Now we want to see when the Hadamard product of generically $d$-parameterized subvarieties is non-singular and how big the singular locus is when singularities show up. We start with a more general statement in the line of \cite{R1,R2}, which will apply to our case.

\begin{teo}\label{teosing}
Let $\mathcal{S}\subseteq\PP^m$ be a smooth irreducible subvariety, $n<m$ and $\mathcal{S}'\subseteq\PP^n$ the image of $\mathcal{S}$ under a generic projection.
\begin{enumerate}
  \item[{\it i)}]  If $n\geq dim(\sigma_2(\mathcal{S}))$, then $\mathcal{S}'$ is smooth.
  \item[{\it ii)}] If $dim(\mathcal{S})<n<dim(\sigma_2(\mathcal{S}))$, then  $ dim(\mbox{Sing}(\mathcal{S}'))\geq 2dim(\mathcal{S})-n$.
\end{enumerate}
\end{teo}
\begin{proof}
Let $\pi$ be the projection from $\PP^m$ to $\PP^n$ whose center is a generic linear subspace $\Lambda$ of dimension $m-n-1$ and let $\sigma_2=\sigma_2(\mathcal{S}).$
\\ \noindent {\it i)} If $n\geq dim(\sigma_2)$, since $\Lambda$ is generic, we have that $\Lambda\cap \sigma_2=\emptyset$, then $\mathcal{S}'$ is smooth.
\\ \noindent {\it ii)}  Define the incidence correspondence $\Theta \subseteq \mbox{Sing}(\mathcal{S}')\times(\Lambda\cap\sigma_2)$ where \[\Theta=\lbrace(Q,P): \{Q\}=\pi(r_P\setminus\Lambda), r_P \mbox{ is a tangent or secant line to $\mathcal{S}$ through } P \rbrace.\]
We consider the projection maps $p_1:\Theta\rightarrow\mbox{Sing}(\mathcal{S}')$ and $p_2:\Theta\rightarrow\Lambda\cap\sigma_2$.
First we prove that $p_1$ has a finite fiber over a point $Q\in\mbox{Sing}(\mathcal{S}')$. Since $\Lambda$ is a hyperplane in $\overline{\pi^{-1}(Q)}$, and $\Lambda\cap \mathcal{S}=\emptyset$, then $\overline{\pi^{-1}(Q)}\cap \mathcal{S}$ contains only a finite number of points. Since each secant, or tangent, line to $\mathcal{S}$ contains points of $\mathcal{S}$, then $\overline{\pi^{-1}(Q)}$ contains a finite number of secant, or tangent, lines to $\mathcal{S}$; by the genericity of $\Lambda$ each of these lines contains a finite number of points of $\Lambda\cap\sigma_2$. Hence, $p_1^{-1}(Q)$ is finite. Now we consider the generic fiber of $p_2$ over $P\in\Lambda\cap\sigma_2$. Since the family of secant and tangent lines to $\mathcal{S}$ through $P$ has dimension at least $2dim(\mathcal{S})+1- dim(\sigma_2),$ then so does the generic fiber of $p_2$. Since $ dim(\Lambda\cap\sigma_2)= dim(\Lambda)+ dim(\sigma_2)-m$, we conclude that
$$dim(\mbox{Sing}(\mathcal{S}'))= dim(\Theta)\geq  dim(\Lambda)+2dim(\mathcal{S})+1-m=2dim(\mathcal{S})-n.$$
\end{proof}

\begin{cor}\label{propsing}
Let $r, s, d_X, d_Y$ be positive integers, let $N=\binom{r+d_X}{d_X}\binom{s+d_Y}{d_Y}-1$ and $N-\left(\binom{r+d_X}{d_X}+\binom{s+d_Y}{d_Y}-2\right)\leq n\leq N-1$. Let $X$ and $Y$ be two generically $d_X$-parameterized and $d_Y$-parameterized subvarieties of $\PP^n$ of dimensions $r$, $s$, respectively. Let $S$ be the Segre-Veronese embedding of type $(d_X,d_Y)$ of $\mathbb{P}^r\times\mathbb{P}^s$.
\begin{enumerate}
  \item[{\it i)}]  If $n\geq dim(\sigma_2(S))$, then $X\hada Y$ is smooth.
  \item[{\it ii)}] If $r+s<n<dim(\sigma_2(S))$, then  $ dim(\mbox{Sing}(X\hada Y))\geq 2r+2s-n$.
\end{enumerate}
\end{cor}
\begin{proof}
  Since the projection in the proof of Theorem \ref{teoXhadaYN-1} is generic, we may replace $m$ with $N$ and $\mathcal{S}$ with $S$ in Theorem \ref{teosing}, so that $\mathcal{S'}=X\hada Y.$
\end{proof}

\begin{oss}{\rm
If $X$ and $Y$ are not generic enough, it can happen that $dim(Sing(X\hada Y))$ is smaller than $2r+2s-n$, as Example \ref{Ex3.6} shows.
\par Also note that the bound of Corollary \ref{propsing}-{\it ii)} can be sharp, as Example \ref{Ex3.1} shows.
}
\end{oss}
\begin{oss}{\rm
If $(d_X,d_Y)=(1,1)$, then $\sigma_2(S)$ can be identified with the variety of $r\times s$ matrices of rank at most $2$ and so $dim(\sigma_2(S))=2r+2s-1$.
\par If $(d_X,d_Y)\neq(1,1)$, by \cite[Theorem 4.2]{AB}, we have that $$dim(\sigma_2(S))=\min\{N,2r+2s+1\},$$ and it is easy to check that $dim (\sigma_2(S))=2r+2s+1$.
}
\end{oss}
\begin{oss}{\rm
In the case $(d_X,d_Y)=(1,1)$, Corollary \ref{propsing} yields that $X\hada Y$ is either smooth or $dim(\mbox{Sing}(X\hada Y))\geq 2r+2s-n>2r+2s-dim(\sigma_2(S))=1.$
Thus, if $X\hada Y$ is not smooth, it is singular at least along a surface.
}
\end{oss}

The following conditions show that the hypotheses of Corollary \ref{propsing} hold in a large number of cases.
\begin{lemm}\label{Lemmatabelle}
Using the notations of Corollary \ref{propsing}, we have that:
\begin{itemize}
\item[{\it i)}]  If either $d_X\geq 6$ or $d_Y\geq 6$, then $N-\left(\binom{r+d_X}{d_X}+\binom{s+d_Y}{d_Y}-2\right)\geq dim(\sigma_2(S))$.

\item[{\it ii)}]  If  $(d_X,d_Y,r,s,n)$ is in the following table, then $N-\left(\binom{r+d_X}{d_X}+\binom{s+d_Y}{d_Y}-2\right)\leq n\leq N-1$ and $r+s<n< dim(\sigma_2(S))$.
\begin{center}
 \begin{tabular}{|c|c|c|c|c|}
 \hline
 $d_X$&$d_Y$ &$r$ &$s$ &$n$\\ \hline
 $2$ &$2$ &$1$ &$1$ & $n=4$ \\  \hline
 $4$ &$1$ &$1$ &$1$ & $n=4$ \\  \hline
 $3$ &$1$ &$1$ &$\leq 3$ & $3s\leq n\leq 2s+2$ \\  \hline
 $2$ &$1$ &$1$ &$\forall$ & $2s\leq n\leq 2s+2$ \\  \hline
 $2$ &$1$ &$2$ &$1$ & $5\leq n\leq 6$ \\  \hline
 $1$ &$4$ &$1$ &$1$ & $n=4$ \\  \hline
 $1$ &$3$ &$\leq 3$ &$1$ & $3r\leq n\leq 2r+2$ \\  \hline
 $1$ &$2$ &$\forall$ &$1$ & $2r+1\leq n\leq 2r+2$ \\  \hline
 $1$ &$2$ &$1$ &$2$ & $5\leq n\leq 6$ \\  \hline
 $1$ &$1$ &$1$ &$\forall$  & $s+2\leq n\leq 2s$\\  \hline
 $1$ &$1$ &$2$ &$\geq 3$  & $2s\leq n\leq 2s+2$\\  \hline
 $1$ &$1$ &$2$ &$2$  & $5\leq n\leq 6$\\  \hline
 $1$ &$1$ &$2$ &$1$  & $n=4$\\  \hline
 $1$ &$1$ &$3$ & $\leq 5$ & $3s\leq n\leq 2s+4$\\  \hline
 $1$ &$1$ &$\forall$ & $1$ & $r+2\leq n \leq 2r$ \\ \hline
 $1$ &$1$ &$\geq 3$ & $2$ & $2r\leq n\leq 2r+2$ \\ \hline
  $1$ &$1$ &$2$ & $2$ & $5\leq n\leq 6$ \\ \hline
   $1$ &$1$ &$1$ & $2$ & $n=4$ \\ \hline
 $1$ &$1$ &$\leq 5$ & $3$ & $3r\leq n\leq 2r+4$\\ \hline
\end{tabular}
\end{center}

	\end{itemize}
\end{lemm}
\begin{oss}{\rm
In the cases of Lemma \ref{Lemmatabelle}-{\it i)}, for small values of $d_X$ and $d_Y$, the cases in which the inequality  $N-\left(\binom{r+d_X}{d_X}+\binom{s+d_Y}{d_Y}-2\right)\geq dim(\sigma_2(S))$ holds can be determined in terms of $r$ and $s$.
}
\end{oss}

\begin{oss}\label{remsegremaggell}{\rm
Let $S$ be the Segre-Veronese variety with $\ell>2$. By \cite[Theorem 4.2]{AB}, $S$ does not have a defective secant variety, and thus $$dim(\sigma_2(S))=\min\left\{N,2\left( r_1+\cdots+r_\ell\right)+1\right\},$$
and it is easy to check that $dim(\sigma_2(S))=2\left( r_1+\cdots+r_\ell\right)+1$. 	
}
\end{oss}
\vskip 0.5cm
Notice that Proposition \ref{Lemmasmooth} easily extends to a finite number of varieties.
Moreover by using Remark \ref{remsegremaggell}, Corollary \ref{propsing} can be extended to a finite number of varieties.
\begin{prop}\label{propX1hadaXkN-1}
Let $\ell>2$. For $i=1,\dots,\ell$, let $r_i, d_i,$ be positive integers, let $N=\binom{r_1+d_1}{d_1}\cdots\binom{r_\ell+d_\ell}{d_\ell}-1$ and $N-\left(\binom{r_1+d_1}{d_1}+\cdots+\binom{r_\ell+d_\ell}{d_\ell}-\ell\right)\leq n\leq N-1$. For $i=1,\dots,\ell$, let $X_i$ be a generically $d_i$-parameterized subvariety of $\PP^{n}$ of dimension $r_i$.
\begin{enumerate}
  \item[{\it i)}] If $n\geq 2\left( r_1+\cdots+r_\ell\right)+1$, then $X_1\hada\cdots\hada X_\ell$ is smooth;
  \item[{\it ii)}] if $\left( r_1+\cdots+r_\ell\right)< n< 2\left( r_1+\cdots+r_\ell\right)+1$, then   $ dim(\mbox{Sing}(X_1\hada\cdots\hada X_\ell))\geq 2\left( r_1+\cdots+r_\ell\right)-n$.
\end{enumerate}
\end{prop}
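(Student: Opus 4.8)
The plan is to follow the proof of Proposition \ref{propsing} essentially verbatim, replacing the two-factor Segre--Veronese picture coming from Theorem \ref{teoXhadaYN-1} by the $\ell$-factor picture underlying Theorem \ref{teoX1hadaXkN-1}. Let $S\subset\PP^N$ be the Segre--Veronese variety of type $(d_1,\dots,d_\ell)$ of $\PP^{r_1}\times\cdots\times\PP^{r_\ell}$, so that $dim(S)=\sum r_i$. Exactly as in the proof of Theorem \ref{teoXhadaYN-1} (and its $\ell$-factor version), the generic parameterizations of the $X_i$ produce $n+1$ points of $S$ spanning a generic $\PP^n\subset\PP^N$, hence a linear projection $\pi\colon\PP^N\dashrightarrow\PP^n$ with center $\Lambda$ of dimension $N-n-1$ that may be assumed generic, and one identifies $X_1\hada\cdots\hada X_\ell=\pi(S)$. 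Finally I record, from Remark \ref{remsegremaggell}, that for $\ell>2$ the secant line variety is non-defective, so that $dim(\sigma_2(S))=2\sum r_i+1$; this is what makes the numerical thresholds in the statement come out cleanly.

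For part {\it i)}, the argument is a dimension count: a generic linear space $\Lambda$ of dimension $N-n-1$ avoids $\sigma_2=\sigma_2(S)$ as soon as $dim(\Lambda)+dim(\sigma_2)<N$, that is $(N-n-1)+(2\sum r_i+1)<N$, which is exactly $n\geq 2\sum r_i+1$. When $\Lambda\cap\sigma_2=\emptyset$, the projection $\pi$ separates the points of $S$ and avoids every secant and every tangent line of $S$, hence $\pi_{\vert_S}$ is an isomorphism onto its image and $\pi(S)=X_1\hada\cdots\hada X_\ell$ is smooth.

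For part {\it ii)}, I would reproduce the incidence correspondence of Proposition \ref{propsing}: set $\Theta\subseteq\mbox{Sing}(X_1\hada\cdots\hada X_\ell)\times(\Lambda\cap\sigma_2)$ consisting of pairs $(Q,P)$ with $Q=\pi(\langle\Lambda,r_P\rangle)$ and $r_P$ a secant or tangent line of $S$ through $P$, with projections $p_1,p_2$. The fibers of $p_1$ are finite by the same reasoning as before ($\Lambda$ is a hyperplane of $\pi^{-1}(Q)$ meeting $S$ in finitely many points, giving finitely many secant or tangent lines, each meeting $\Lambda\cap\sigma_2$ finitely). The fiber of $p_2$ over $P$ is the family of secant and tangent lines of $S$ through $P$, of dimension at least $2dim(S)+1-dim(\sigma_2)=2\sum r_i+1-dim(\sigma_2)$. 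Combining this with $dim(\Lambda\cap\sigma_2)=dim(\Lambda)+dim(\sigma_2)-N$ yields
\[
dim(\mbox{Sing}(X_1\hada\cdots\hada X_\ell))=dim(\Theta)\geq dim(\Lambda)+2\sum r_i+1-N=2\sum r_i-n,
\]
as claimed.

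The main obstacle, and the only place where $\ell>2$ really enters, is confirming that the two inputs borrowed from the two-factor case survive for many factors. First, I must check that the center $\Lambda$ can still be taken generic: this needs the $\ell$-factor analogue of the hyperplane-section count in the proof of Theorem \ref{teoXhadaYN-1} (via \cite[Proposition 18.10]{Harris}), ensuring that successive generic hyperplane sections of $S$ keep dimension large enough for $n+1$ points of $S$ to span a generic $\PP^n$. Second, and crucially, I must invoke the non-defectivity of $\sigma_2(S)$ for $\ell>2$ from \cite[Theorem 4.2]{AB} (Remark \ref{remsegremaggell}) to get $dim(\sigma_2(S))=2\sum r_i+1$; it is precisely this equality that fixes the threshold $2\sum r_i+1$ in both parts. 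Once genericity of $\Lambda$ and this secant dimension are granted, the remaining secant and tangent line counts are routine and identical to the $\ell=2$ case.
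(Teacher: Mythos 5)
Your proposal is correct and is exactly the paper's (largely implicit) argument: the paper justifies this proposition in one line by saying that Lemma \ref{Lemmasmooth} and Proposition \ref{propsing} extend to $\ell$ factors once Remark \ref{remsegremaggell} supplies $dim(\sigma_2(S))=2\sum r_i+1$ via \cite[Theorem 4.2]{AB}, and your write-up simply makes explicit the same projection-from-$\Lambda$ setup, the same incidence correspondence, and the same two genericity checks (spanning via \cite[Proposition 18.10]{Harris} and non-defectivity of the secant line variety). No gap; you have spelled out precisely what the paper leaves to the reader.
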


\section{Some examples}\label{section4}
Here we collect some examples to show the role of the genericity assumption in our results; we use \verb|CoCoA| (\cite{CoCoA}), following the procedure given in \cite{BCFL1}.

In Example \ref{Ex2.7} we have $n\geq N$, but $X$ and $Y$ are not generic enough to have the matrix $M'$ of maximum rank (see Theorem \ref{teoX1timesXkbign2} and Corollary \ref{corX1timesXkbign}). Also, the varieties $X$ and $Y$ are both non singular, but $\mbox{Sing}(X\hada Y)\neq \emptyset$, and so $X\hada Y$ is neither projectively equivalent nor isomorphic to the product variety $X\times Y$.

In Example \ref{Ex3.6} we have $n< N$, $X$ and $Y$ are generic enough to have the matrix $M'$ of maximum rank, but, $X$ and $Y$ are not generic enough to give a generic center of projection $\Lambda$ (see Theorem \ref{teoXhadaYN-1} and Corollary \ref{propsing}).
Also, the degree formula and the lower bound on the dimension of the singular locus do not hold.
\par In Example \ref{Ex3.1} the dimension of the singular locus is equal to the lower bound.

Finally we give an example (Example \ref{Exnocomp}) which is not computable but can be directly deduced from our results.

\begin{ex}\label{Ex2.7}{\rm
		Let $X$ be the line of $\PP^5$ given by the equations $\{x_0-x_1=0,x_0-x_2=0,x_3-x_5=0,x_0+x_3-x_4=0\}$ and let $Y$ be the conic of $\PP^5$ given by the equations $\{x_0-2x_3+3x_5=0,x_1+x_4-x_5=0,x_2+2x_3-3x_4=0,x_0^2+x_1^2+x_2^2+x_3^2+x_4^2+x_5^2+5x_0x_1+8x_0x_1-2x_2x_5+10x_0x_4=0\}$. Here $h=1$ and $k=2$ and so $N=(h+1)(k+1)-1=5$.

		Computations show that the Hadamard product has dimension $2=r+s=dim(X)+dim(Y)$ and degree $4=\binom{r+s}{r}deg(X)deg(Y)$ as expected, but $HF_{X\hada Y}\neq HF_X HF_Y$. Also, the singular locus has dimension $0$ and degree $5$.
		\par In this case the matrix $M'$ does not have maximum rank. In fact, first we write the parameterizations of $L_1=X$ and of the plane $L_2$ containing $Y$:
		\[
		L_1:\begin{cases}
		x_0=y_1\\
		x_1=y_1\\
		x_2=y_1\\
		x_3=y_0\\
		x_4=y_0+y_1\\
		x_5=y_0\\
		\end{cases}
		L_2:\begin{cases}
		x_0=2z_0-3z_2\\
		x_1=-z_1+z_2\\
		x_2=-2z_0+3z_1\\
		x_3=z_0\\
		x_4=z_1\\
		x_5=z_2\\
		\end{cases}
		\]
		and then we obtain
		\[
		M'
		=\begin{pmatrix}
		0 & 0 & 0 & 2 & 0 & -3\\
		0 & 0 & 0 & 0 & -1 & 1\\
		0 & 0 & 0 & -2 & 3 & 0\\
		1 & 0 & 0 & 0 & 0 & 0\\
		0 & 1 & 0 & 0 & 1 & 0\\
		0 & 0 & 1 & 0 & 0 & 0\\
		\end{pmatrix}
		\]
		whose determinant equals $0$.
	}
\end{ex}

\begin{ex}\label{Ex3.6}{\rm
Let $X$ be the line of $\PP^4$ given by the equations $\{x_0-x_1=0,x_0-x_2=0,x_3-2x_4=0\}$ and let $Y$ be the conic in $\PP^4$ given by the equations $\{x_0-x_3=0,x_1-x_4=0,x_1^2-x_0x_2=0\}$.

Computations show that $X\hada Y$ has dimension $2=r+s=dim(X)+dim(Y)$ but it has degree $3<\binom{r+s}{s}dim(X)dim(Y)$.
\par Surprisingly enough $X\hada Y$ does not have singularities and $dim(Sing(X\hada Y))<2r+2s-n=0$ (see Corollary \ref{propsing}). Moreover $M'$ has maximum rank. In fact, writing the parameterization of $X$ and $Y$
\[
X:\begin{cases}
x_0=y_0-y_1\\
x_1=y_0-y_1\\
x_2=y_0-y_1\\
x_3=y_0\\
x_4=2y_0\\
\end{cases}
\ \ \ \ \ Y:\begin{cases}
x_0=z_0^2\\
x_1=z_0z_1\\
x_2=z_1^2\\
x_3=z_0^2\\
x_4=z_0z_1\\
\end{cases}
\]
we obtain
$$
M'=\left(
      \begin{array}{cccccc}
        -1 & 0 & 0 & -1 & 0 & 0 \\
        0 & 1 & 0 & 0 & -1 & 0 \\
        0 & 0 & 1 & 0 & 0 & 1\\
        1 & 0 & 0 & 0 & 0 & 0 \\
        0 & 2 & 0 & 0 & 0 & 0 \\
      \end{array}
    \right).
$$
\par Note that $\Lambda$ is the point $[0:0:-2:0:0:1]$ and so it belongs to the Segre-Veronese variety $S$ and this is why our genericity hypothesis on $X$ and $Y$ is not satisfied.
}
\end{ex}
\begin{ex}\label{Ex3.1}{\rm
		Let $X$ be the line of $\PP^3$ given by the equations $\{x_0+x_1+x_2+2x_3=x_0-x_1+4x_2-x_3=0\}$ and let $Y$ be the conic of $\PP^3$ given by the equations $\{x_0+2x_1+3x_2+x_3=x_0^2+2x_0x_2+2x_0x_3+x_1^2+2x_1x_2-2x_1x_3+x_2^2+2x_2x_3+x_3^2=0\}$.
Here $r=s=1$, $d_X=1$ and $d_Y=2$, so $3$ is the minimum possible value for $n$, moreover we are in the case {\it ii)} of  Corollary \ref{propsing}.
\par In this case $X\hada Y$ is a singular quartic surface and the singular locus is {\em exactly} of dimension $1=2r+2s-n$.}
\end{ex}

\begin{ex}\label{Exnocomp}{\rm
Let $k$ be a positive integer. Let $\mathcal{C}$ be a generic  plane conic in $\PP^{2k+1}$.
Let $L $ be a generic linear subspace of $\PP^{2k+1}$ of dimension $k$. In view of
Lemma \ref{Lemmatabelle}, we can use Theorem \ref{teoXhadaYN-1} and Corollary
\ref{propsing} to obtain $dim(\mathcal{C}\hada  L)=k+1$,
$deg(\mathcal{C}\hada L)=\binom{k+1}{k}\cdot 2\cdot 1=2(k+1)$ and
$dim(Sing(\mathcal{C}\hada  L))\geq 2+2k-(2k+1)=1$.}
\end{ex}

\section{References}

\frenchspacing

\end{document}